\documentclass{article}
\usepackage[margin=1.4in]{geometry}
\usepackage{amsfonts} 
\usepackage{amsthm} 
\usepackage{amsmath} 
\usepackage{tikz-cd} 
\usepackage{import} 
\usepackage{hyperref} 
\usepackage{cleveref} 

\newtheorem{THM}{Theorem}
\newtheorem{prop}{Proposition}
\newtheorem{lem}[prop]{Lemma}
\theoremstyle{definition}
\newtheorem{ex}[prop]{Example}

\newcommand{\doublebackslash}{\backslash\mkern-5mu\backslash}
\DeclareMathOperator{\st}{st}

\newcommand{%
	    \def\svgwidth{\columnwidth}
	        \import{./figures/}{.pdf_tex}
	}[1]{%
	    \def\svgwidth{\columnwidth}
	        \import{./figures/}{#1.pdf_tex}
	}

\begin{document}
\title{On Whitehead's cut vertex lemma}
\author{Rylee Alanza Lyman}
\maketitle

\begin{abstract}
    One version of Whitehead's famous cut vertex lemma says that
    if an element of a free group is part of a free basis,
    then a certain graph associated to its conjugacy class that we call the star graph
    is either disconnected or has a cut vertex.
    We state and prove a version of this lemma
    for conjugacy classes of elements and convex-cocompact subgroups
    of groups acting cocompactly on trees with finitely generated edge stabilizers.
\end{abstract}

Given a collection of conjugacy classes $C$ of elements
of a finite-rank free group,
expressed as cyclically reduced words in some free basis,
one associates a \emph{star graph,} 
which records the adjacency of letters of the free basis in cyclic words in $C$.
A famous lemma of Whitehead \cite{Whitehead} implies that if
$C = \{x\}$, where $x$ is the conjugacy class of an element of some free basis,
then the star graph of $C$ is either disconnected or has a cut vertex.

Recently, Bestvina--Feighn--Handel \cite{BestvinaFeighnHandel}
gave an expanded definition of the star graph of a
collection of conjugacy classes of finitely generated subgroups of free groups.

The purpose of this note is to prove a cut vertex lemma
for certain conjugacy classes of elements and convex-cocompact subgroups 
of groups acting cocompactly on trees with finitely generated edge stabilizers,
or equivalently, of fundamental groups of finite graphs of groups with finitely generated edge groups.

To wit, let $G$ be a group acting on a tree $T$ with finitely generated edge stabilizers.
We say $T$ and a $G$-tree $S$ belong to the same \emph{reduced deformation space}
in the sense of Forester and Guirardel--Levitt \cite{Forester,GuirardelLevittDeformation}
if there are $G$-equivariant continuous maps $T \to S$ and $S \to T$
which become simplicial maps (edges to edges, vertices to vertices)
after $G$-equivariantly subdividing edges in the domain tree.
(Guirardel--Levitt call such maps ``morphisms.'')
A tree $S'$ is a \emph{collapse} of $S$ if the $G$-tree $S'$ may be obtained from $S$
by equivariantly collapsing edges of $S$ to vertices.
We say that $T$ \emph{strongly dominates} any tree $S'$ which is a collapse 
of a tree in the same reduced deformation space as $T$.
A subgroup $H$ of $G$ is \emph{convex-cocompact} in $T$
if $H$ acts on the $H$-minimal subtree of $T$ cocompactly.
Given such an $H$ or an element $g \in G$,
we say $H$ and $g$ are \emph{simple} if there exists a tree $S'$ strongly dominated by $T$
with one orbit of edges with the property that $H$ (or respectively $g$) is elliptic in $S'$.
A collection $C$ of conjugacy classes of elements and convex-cocompact subgroups of $G$
is \emph{jointly simple} if there is some tree $S'$
strongly dominated by $T$
in which all conjugacy classes in $C$ are elliptic.

\begin{THM}
    \label{maintheorem}
    If a collection $C$ of conjugacy classes of elements and convex-cocompact subgroups of $G$ 
    is jointly simple, then the star graph of $C$
    is either disconnected or has a cut vertex.
\end{THM}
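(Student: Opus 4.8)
The plan is to reduce to a splitting with one orbit of edges, pull it back along a morphism, and use the resulting structure to exhibit the cut vertex (or the disconnection). If every class in $C$ is elliptic in a tree $S'$ strongly dominated by $T$, I would collapse all but one orbit of edges of $S'$ --- retaining an orbit whose edge is not killed once the others are collapsed, so the result stays a nontrivial minimal tree --- to obtain a $G$-tree $S$ with a single orbit of edges. Equivariant collapse preserves ellipticity, and a collapse of a collapse of a tree in the reduced deformation space of $T$ is again strongly dominated by $T$; so $C$ remains elliptic in $S$, and composing a morphism $T\to\hat T$ (onto a tree $\hat T$ in the reduced deformation space of $T$ of which $S$ is a collapse) with the collapse $\hat T\to S$ produces a morphism $f\colon T\to S$. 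Classes elliptic in $T$ contribute nothing to the star graph, so I may assume every class in $C$ is hyperbolic in $T$; and since the edge stabilizers of $T$, hence of trees in its deformation space, hence of $S$, are finitely generated, the fixed subtrees $\mathrm{Fix}_S(c)$ below are suitably controlled. Let $G=A\ast_{C_\epsilon}B$ or $G=A\ast_{C_\epsilon}$ be the one-edge splitting dual to $S$, and fix an edge $\tilde\epsilon$ of $S$ with midpoint $m$.

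The key construction is a sorting of the directions of $T$ --- which are the vertices of the star graph --- induced by $f$ and $\tilde\epsilon$. The $G$-invariant set $Z:=f^{-1}(G\cdot m)$ consists of interior points of edges of $T$; the components of $T\setminus Z$ are subtrees, each carried by $f$ into the closed star of one vertex of $S$, and across each point of $Z$ the two adjacent components are carried into the two ends of a single edge of $S$. Thus for a vertex $\tilde v$ of $T$ and a direction $d$ there, the edge carrying $d$ is either collapsed by $f$ or else leaves the component of $\tilde v$ through a point of $Z$, heading toward a definite edge in $G\cdot\tilde\epsilon$; this partitions the directions at $\tilde v$ into the collapsed ones, the ones pointing straight at a translate of $\tilde\epsilon$, and, for each of the two sides of $G\cdot\tilde\epsilon$, the ones heading to that side. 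One then shows the star graph respects this partition: an edge of the star graph between $d_1$ and $d_2$ at $\tilde v$ comes from a turn of some $[c]\in C$ lying in the $[c]$-minimal subtree of $T$ (the axis of $c$, or the $H$-minimal subtree if $[c]=[H]$ for a convex-cocompact subgroup $H$); since $c$ is elliptic in $S$, $f$ carries that subtree to a bounded $c$-invariant subtree of $S$ which meets $G\cdot\tilde\epsilon$ in essentially one place, so $d_1$ and $d_2$ cannot lie on opposite sides of $G\cdot\tilde\epsilon$ unless one of them is collapsed or points at a translate of $\tilde\epsilon$. Passing to the quotient by $G$, the collapsed and $\tilde\epsilon$-pointing directions reduce to a single vertex of the star graph; deleting it separates the star graph into the pieces indexed by the two sides, so it has a cut vertex, and if that vertex is absent or isolated the star graph is already disconnected.

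The hard part is the previous paragraph, and in particular two degeneracies it must survive. Since $S$ is a proper collapse of $\hat T$ it lies outside the deformation space of $T$, so $f$ genuinely collapses edges; moreover the minimal subtrees of distinct translates of a single class in $C$ may overlap, so those overlaps cannot all be collapsed, and one must show --- after choosing $f$ as efficient as possible --- that the collapsed directions together with the $\tilde\epsilon$-pointing ones still descend to a single vertex of the star graph rather than to several. Second, when some $[c]\in C$ is conjugate into a conjugate of $C_\epsilon$, the subtree $\mathrm{Fix}_S(c)$ need not be a single vertex, so ``essentially one place'' and ``opposite sides'' must be relativized to a single edge of $\mathrm{Fix}_S(c)$, after which one rechecks that the separation still collapses to deletion of one vertex downstairs. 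Finite generation of the edge stabilizers and convex-cocompactness of the subgroups in $C$ are exactly what keep these fixed subtrees and minimal subtrees of finite type, so the star graph is locally finite enough to run the counting; the amalgam and HNN cases proceed in parallel, using that a minimal one-edge $G$-tree has no inversions.
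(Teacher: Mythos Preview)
Your outline is not a proof as it stands---you say so yourself when you flag the ``hard part'' and the ``degeneracies''---and the central step does not go through.  The map you build runs $T\to S$, whereas the object you must analyse, the star graph, is assembled from directions in $\mathbb{G}=G\backslash T$.  Nothing about the one edge of $S$ singles out a \emph{single} direction in any $D_w$: many $G$-inequivalent directions of $T$ may be collapsed by $f$, and many more may ``point at $\tilde\epsilon$'' in your sense.  Your claim that after passing to the quotient these all ``reduce to a single vertex of the star graph'' is exactly the crux of the theorem, and it is simply false without further work.  (Already for $F_2$ with the rose and $C=\{[a]\}$, both directions $a$ and $\bar a$ are collapsed by any reasonable $f\colon T\to S$; they are two distinct vertices of $\Gamma_w$, not one.)  Your ``two sides of $G\cdot\tilde\epsilon$'' partition is also ill-posed: at a fixed $\tilde v$ every non-collapsed direction leaves the component of $f(\tilde v)$ through the \emph{same} side of some edge of $S$, so there is no dichotomy to exploit at a single vertex, and in the amalgamated case every vertex of $T$ lands in a fixed orbit of vertices of $S$ so the ``other side'' is never seen from the terminal vertex either.

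The paper's argument runs in the opposite direction.  One takes the morphism $\hat T\to T$ (your $\hat T$, a tree in the reduced deformation space of $T$ collapsing to $S'$), passes to the quotient $f\colon\mathcal{G}\to\mathbb{G}$, and factors $f$ as a sequence of Stallings folds.  The preimages of the vertices of $G\backslash S'$ give disjoint subgraphs $\mathcal{H}_1,\mathcal{H}_2\subset\mathcal{G}$ carrying every class in $C$; after folding internally one may assume $f$ immerses each $\mathcal{H}_i$, so the Stallings graph of $C$ factors through $\mathcal{G}$.  If $f$ is already an isomorphism one finds an edge of $\mathbb{G}$ missed entirely and the star graph is disconnected.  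Otherwise one folds all the way down to an \emph{almost $\mathbb{G}$} graph of groups $\mathcal{G}'\to\mathbb{G}$ (a single fold from the target), and a direct case analysis shows its star graph has a cut vertex: crucially, the single fold singles out a specific oriented edge of $\mathbb{G}$, and the cut vertex is a direction with that underlying edge.  The lemma that turns taken by $\mathcal{G}_C$ are taken by $\mathcal{G}'$ then transports this cut vertex back to the star graph of $C$.  The point you are missing is precisely this reduction: it is the fold into $\mathbb{G}$, not the collapse out of $T$, that isolates a single direction.
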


For example, if $F_n$ is nonabelian free and acts freely and cocompactly on $T$,
then the reduced deformation space of $T$ is the 
\emph{unreduced} Culler--Vogtmann Outer Space \cite{CullerVogtmann}
(separating edges are allowed)
and any tree $S'$ strongly dominated by $T$ is a one-edge free splitting of $G$,
to which we may associate a free factorization of $F_n$
of the form $F * \mathbb{Z}$ or $F_1 * F_2$.
An element or subgroup is elliptic in $S'$ if and only if it is conjugate
either into $F$ in the first case, or into $F_1$ or $F_2$ in the second case.
Thus in this situation an element $g \in F_n$ is \emph{simple} in our sense if and only if
it is \emph{simple} in the sense that it is contained in some proper free factor of $F_n$.

Our method of proof of \Cref{maintheorem} is inspired 
by a paper of Heusener and Weidmann \cite{HeusenerWeidmann}.
In that paper, they consider conjugacy classes of elements in a finite-rank free group $F_n$,
so $F_n\backslash T$ is a graph with one vertex, a \emph{rose.}
Any tree in the reduced deformation space of $T$
yields a graph $G'$ with a map $G' \to G$
that may be written as a finite sequence of Stallings folds.
They consider a special class of graphs they term \emph{almost roses} for which a single fold suffices.
They show that if $C$ is a jointly simple collection of conjugacy classes (their word is \emph{separable}),
then the star graph of $C$ is a subgraph of the star graph of some almost rose,
and that star graphs of almost roses have cut vertices.

Like Heusener and Weidmann, we work in the quotient of the tree, which is a graph of groups.
Using \cite{BestvinaFeighnGTrees}, we also define a class of graphs of groups
which we term \emph{almost $\mathbb{G}$} which fold onto our fixed reduced graph of groups $\mathbb{G}$
with a single fold.
Our definition of the star graph follows \cite{BestvinaFeighnHandel} 
and is thus slightly different from Heusener and Weidmann's.
It is not literally true that the star graph of $C$ in our sense is a subgraph of the star graph
of an almost $\mathbb{G}$ graph of groups,
but there is still a good relationship, see \Cref{mapsgivemaps}.
We show in \Cref{cutvertex} that our star graph of an almost $\mathbb{G}$ graph of groups has a cut vertex
in a particular sense: in general $\mathbb{G}$ has more than one vertex
and thus the star graph always has multiple components;
we say that the star graph is connected 
if it has exactly one connected component for each vertex of $\mathbb{G}$
and that it has a cut vertex if one of these components has a cut vertex.
The theory of Stallings folds completes the proof.

\section{Stallings graphs of groups}

Suppose $G$ acts cocompactly on a tree $T$.
Associated to this action there is a quotient \emph{graph of groups} $\mathbb{G} = G\doublebackslash T$.
We assume a certain familiarity with graphs of groups in this note,
and refer the reader interested in more information to \cite{Trees,Bass,ScottWall,Myself}.
We follow the notation in \cite[Section 1]{Myself}.
We fix $\mathbb{G}$ throughout this note
and suppose that it is \emph{reduced} in the sense of \cite{Forester}:
in $\mathbb{G}$ this means that if an edge $e$ has 
a surjective edge-to-vertex group inclusion $\mathbb{G}_e \to \mathbb{G}_v$,
then the edge $e$ forms a loop.
In $T$ this means that if $S$ is obtained from $T$ by equivariantly collapsing an orbit of edges,
then there is no equivariant map $S \to T$.
We also assume that edge groups of $\mathbb{G}$ or equivalently edge stabilizers in $T$ are finitely generated.
If we begin with a graph of groups $\mathcal{G}$ which is not reduced,
we may obtain a reduced graph of groups by the operation of \emph{forest collapse,}
see \cite{Myself}.

\paragraph{Morphisms of graphs of groups.}
For a vertex $v$ of a graph of groups,
write $\st(v)$ for the set of oriented edges with initial vertex $v$.
Let $\mathcal{G}$ and $\mathcal{G}'$ be graphs of groups.
A \emph{morphism in the sense of Bass} of graphs of groups $f\colon \mathcal{G} \to \mathcal{G}'$
is a continuous map of underlying graphs taking vertices to vertices and edges to edges
together with homomorphisms $f_v\colon \mathcal{G}_v \to \mathcal{G}'_{f(v)}$
and $f_e\colon \mathcal{G}_e \to \mathcal{G}'_{f(e)}$
for each vertex $v$ and edge $e$ of $\mathcal{G}$
and for each oriented edge $e \in \st(v)$ of $\mathcal{G}$, an element $g_e \in \mathcal{G}'_{f(v)}$
with the property that for all $h \in \mathcal{G}_e$, we have
\[  f_v \iota_e (h)g_e = g_e\iota_{f(e)}f_e(h).  \]
We say a morphism is \emph{Stallings} if each $f_e$ and $f_v$ is injective.
In \cite{Myself}, a notion of a \emph{map} of graphs of groups $f\colon \mathcal{G} \to \mathcal{G}'$ is defined.
Up to homotopy and collapsing subgraphs in $\mathcal{G}$,
each map in that sense may be transformed into a morphism in the sense of Bass
by subdividing each edge of $\mathcal{G}$ into finitely many edges.

Given a vertex $v$ of a graph of groups $\mathcal{G}$,
the set of \emph{directions} $D_v$ based at $v$ is the disjoint union
\[  \coprod_{e \in \st(v)} \mathcal{G}_v/\iota_e(\mathcal{G}_e) \times \{e\}.  \]
Given a morphism $f\colon \mathcal{G} \to \mathcal{G}'$ in the sense of Bass
and a vertex $v$ of $\mathcal{G}$, there is an induced map on directions
$D_vf\colon D_v \to D_{f(v)}$ defined as
\[  D_vf(g\iota_e(\mathcal{G}_e),e) = (f_v(g)g_e\iota_{f(e)}(\mathcal{G}'_{f(e)}),f(e)). \]
The condition on $g_e$ from the previous paragraph implies that this map is well-defined.
A Stallings map $f\colon \mathcal{G} \to \mathcal{G}'$ is an \emph{immersion}
if each $D_vf$ is injective.

\paragraph{Turns.}
The obvious action of $\mathcal{G}_v$ on $D_v$ extends to a diagonal action 
on the set of unordered pairs of directions based at $v$.
A \emph{turn} based at $v$ is the $\mathcal{G}_v$-orbit of such an unordered pair.
A turn is \emph{degenerate} if it is represented by a pair of identical directions
and is otherwise \emph{nondegenerate.}
Given a morphism $f\colon \mathcal{G} \to \mathcal{G}'$ in the sense of Bass,
the map $D_vf$ extends to a map of turns $D_vf$ based at $v$
by the rule that $D_vf([\{d,d'\}]) = [\{D_vf(d),D_vf(d')\}]$.
We say that a turn based at $f(v)$ is \emph{taken by $\mathcal{G}$ at $v$}
if that turn is in the image of $D_vf$.
A foldable map is an immersion if and only if each $D_vf$
maps nondegenerate turns to nondegenerate turns.

\begin{ex}
    \label{immersionex}
    Consider the graph of groups $\mathbb{G}$ described as follows:
    the underlying graph of $\mathbb{G}$ is a tree with three edges, three leaves
    and one vertex of valence three.
    All vertex groups are cyclic of order two, and all edge groups are trivial.
    Color the edges of $\mathbb{G}$ red, orange and blue, see \Cref{immersionfig},
    and label them $r$, $o$ and $b$, respectively and orient them towards the center vertex.
    Consider another graph of groups $\mathcal{G}$.
    The underlying graph of $\mathcal{G}$ is also a finite tree with three leaves 
    and one vertex of valence three.
    In $\mathcal{G}$ the leaves have vertex group $C_2$ and all other groups are trivial.
    We describe an immersion $\mathcal{G} \to \mathbb{G}$
    by describing the images of the graph-of-groups edge paths from the leaves to the center vertex.
    They are $\bar r\star r\bar b\star b\bar r\star r\star$ and $o$ and $r$,
    where $\star$ denotes the nontrivial element of $C_2$, see \Cref{immersionfig},
    where in the figure a $\star$ indicates that the map $f$ 
    sends the nearest pair of edges in $\mathcal{G}$
    to a turn in $\mathbb{G}$ determined by the nontrivial element of $C_2$.
    It is easy to check that $f$ so described is an immersion.
\end{ex}

\begin{figure}[ht]
    \begin{center}
	    \def\svgwidth{\columnwidth}
	        \import{./figures/}{immersion.pdf_tex}
	
        \caption{Right: The graph of groups $\mathbb{G}$. Left: The immersion $f\colon \mathcal{G} \to \mathbb{G}$.}
        \label{immersionfig}
    \end{center}
\end{figure}

\paragraph{Stallings graphs of groups.}
Let $H$ be a subgroup of $G$,
and consider the minimal $H$-invariant subtree $T_H$ of $T$.
The inclusion $T_H \to T$ descends to an immersion in the sense of Bass \cite{Bass}
of quotient graphs of groups $s\colon H\doublebackslash T_H \to \mathbb{G} = G \doublebackslash T$.
The resulting map on fundamental groups $s_\sharp\colon \pi_1(H\doublebackslash T_H) \to \pi_1(\mathbb{G})$
has image conjugate to $H$.
If $H$ is convex-cocompact in $T$, then $H\doublebackslash T_H$ is a finite graph of groups.
The graph of groups $H\doublebackslash T_H$ is \emph{core} in the sense that
there is no proper subgraph of groups whose inclusion is a homotopy equivalence in the sense of \cite{Myself}.
If $\mathcal{G}$ is a finite, not necessarily connected, core graph of groups
equipped with an immersion $s\colon \mathcal{G} \to \mathbb{G}$,
we say that $\mathcal{G}$ is a \emph{Stallings graph of groups.}
If $\mathcal{G}_1,\ldots,\mathcal{G}_k$ are the components of $\mathcal{G}$,
we say that $\mathcal{G}$ \emph{represents}
the collection of conjugacy classes 
$\{[s_\sharp(\pi_1(\mathcal{G}_1))],\ldots,[s_\sharp(\pi_1(\mathcal{G}_k))]\}$.
If $g$ is an element of $G$, we may represent $g$ 
by a Stallings graph of groups for the cyclic subgroup $\langle g \rangle$.
Thus for each finite collection $C$ of conjugacy classes of convex-cocompact subgroups and elements of $G$,
there is a finite Stallings graph of groups $s\colon \mathcal{G} \to \mathbb{G}$ representing $C$.

\paragraph{The star graph.}
Let $w$ be a vertex of a graph of groups $\mathcal{G}$
equipped with a Stallings morphism $f\colon \mathcal{G} \to \mathbb{G}$.
The \emph{piece of $w$} is the following graph.
It has a center vertex $w$ and one edge with initial vertex $w$ for each element of $D_w$.
We think of the edge for each $d \in D_w$ as (temporarily) missing a terminal vertex.
The \emph{star graph of $\mathcal{G}$} is a bipartite graph defined as follows.
Begin with vertex set the disjoint union $\coprod_w D_w$ 
as $w$ varies over the vertex set of $\mathbb{G}$.
For each vertex $v$ of $\mathcal{G}$, let $w = f(v)$ and attach $|\mathbb{G}_w : f_v(\mathcal{G}_v)|$
copies of the piece of $v$ in the following way.
For $g \in \mathbb{G}_w$ representing a coset of $\mathbb{G} / f_v(\mathcal{G}_v)$, 
the $gf_v(\mathcal{G}_v)$-copy of the piece of $v$
is attached so that the edge of the piece of $v$ corresponding to the direction $d \in D_v$
has terminal vertex $g.D_vf(d) \in D_w$.
Observe that if $g' = gf_v(h)$, then
\[
    g'.D_vf(D_v) = gf_v(h).D_vf(D_v) = g.D_vf(h.D_v) = g.D_vf(D_v),
\]
so the isomorphism type of the star graph is well-defined independent of the choice
of coset representatives.

The star graph of $\mathcal{G}$ thus is the disjoint union of several graphs 
(which need not themselves be connected)
$\Gamma_w$, one for each vertex $w$ of $\mathbb{G}$,
containing the vertices corresponding to directions in $D_w$.
We say the star graph of $\mathcal{G}$ is \emph{disconnected}
if for some vertex $w$ of $\mathbb{G}$,
the graph $\Gamma_w$ is disconnected.
Supposing each $\Gamma_w$ is connected,
we say that the star graph has a \emph{cut vertex}
if some $\Gamma_w$ has a vertex $d$ corresponding to a direction in $D_w$,
the removal of which disconnects $\Gamma_w$.
The claim of \Cref{maintheorem}
is that if $\mathcal{G}$ is a Stallings graph of groups
representing a jointly simple collection of conjugacy classes of elements and convex-cocompact subgroups,
then the star graph of $\mathcal{G}$ is either disconnected or has a cut vertex.

Notice that if $f$ is an immersion, the star graph has no loops of length two.
Notice as well that a pair of distinct directions $d$ and $d' \in D_w$
are connected in $\Gamma_w$ by a path of length two if and only if the turn $[\{d,d'\}]$
is taken by $\mathcal{G}$.

Let us remark that in the case that $G$ is finite-rank nonabelian free
and $\mathcal{G}$ represents a finite collection of conjugacy classes of elements of $G$,
the star graph is typically thought of as not being bipartite:
since each $v \in \mathcal{G}$ has valence two and trivial vertex group,
each piece of $v$ may be thought of as an edge, rather than two edges joined at a common vertex.
Our treatment is inspired by \cite{Vogtmann,BestvinaFeighnHandel},
where the \emph{pieces} of the star graph are constucted by ``snipping''
midpoints of edges of $\mathcal{G}$.
Indeed, our construction is equivalent to doing this ``snipping'' in the Bass--Serre tree.

\begin{ex}
    Consider the immersion $f\colon \mathcal{G} \to \mathbb{G}$ from \Cref{immersionex}.
    The star graph for $\mathcal{G}$ has twelve vertices corresponding to directions in $\mathbb{G}$
    and fifteen vertices corresponding to the nine vertices of $\mathcal{G}$,
    six of which have trivial vertex group (and thus index two in $C_2$),
    see \Cref{stargraphfig}.
    Most pieces in the star graph are bivalent, either because they correspond to a valence-two vertex
    in $\mathcal{G}$ or because they correspond to a leaf with $C_2$ vertex group.
    The trivalent pieces connect both directions with underlying oriented edge $r$
    to one of the directions with underlying oriented edge $o$.
    Note that the star graph is connected in our sense,
    and that both of the vertices corresponding to directions with underlying oriented edge $r$
    are cut vertices.
\end{ex}

\begin{figure}[ht]
    \begin{center}
	    \def\svgwidth{\columnwidth}
	        \import{./figures/}{stargraph.pdf_tex}
	
        \caption{The star graph of the immersion $f\colon \mathcal{G} \to \mathbb{G}$.}
        \label{stargraphfig}
    \end{center}
\end{figure}

\begin{lem}
    \label{mapsgivemaps}
    Suppose $f\colon \mathcal{G} \to \mathbb{G}$ and $f'\colon \mathcal{G}' \to \mathbb{G}$ 
    are Stallings morphisms of graphs of groups
    and that there exists a Stallings morphism $s\colon \mathcal{G} \to \mathcal{G}'$
    such that the following diagram commutes
    \[  \begin{tikzcd}
        \mathcal{G} \ar[rr, "s"] \ar[rd, "f"] & & \mathcal{G}' \ar[ld, "f'"'] \\
                                              & \mathbb{G}. &
    \end{tikzcd}\]
    Then each turn in $\mathbb{G}$ that is taken by $\mathcal{G}$ at a vertex $v$
    is taken by $\mathcal{G}'$ at the vertex $s(v)$.
\end{lem}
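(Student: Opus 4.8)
The plan is to deduce the lemma from the functoriality of the induced maps on directions. Concretely, I would prove that for every vertex $v$ of $\mathcal{G}$ the square of direction maps commutes,
\[ D_v f \;=\; D_{s(v)} f' \circ D_v s \colon D_v \longrightarrow D_{f(v)}, \]
and then observe that everything else is formal. A turn $\tau$ based at $f(v) = f'(s(v))$ is, by definition, taken by $\mathcal{G}$ at $v$ exactly when $\tau = D_v f([\{d,d'\}])$ for some $d,d' \in D_v$. Since each of $D_v f$, $D_v s$ and $D_{s(v)} f'$ acts on turns by the rule $[\{d,d'\}] \mapsto [\{D(d),D(d')\}]$, the displayed identity immediately passes to turns; hence $\tau = D_{s(v)} f'\bigl(D_v s([\{d,d'\}])\bigr)$ lies in the image of the turn map $D_{s(v)} f'$, i.e.\ $\tau$ is taken by $\mathcal{G}'$ at $s(v)$, as desired.

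To establish the displayed identity I would first recall how morphisms in the sense of Bass compose. Writing $g^s_e \in \mathcal{G}'_{s(v)}$ and $g^{f'}_{e'} \in \mathbb{G}_{f'(v')}$ for the elements attached by $s$ (to $e \in \st(v)$) and by $f'$ (to $e' \in \st(v')$), the composite $f' \circ s$ has vertex homomorphisms $f'_{s(v)} \circ s_v$, edge homomorphisms $f'_{s(e)} \circ s_e$, and attaches to $e \in \st(v)$ the element $f'_{s(v)}(g^s_e)\, g^{f'}_{s(e)}$. A one-line computation --- combine $s_v\iota_e(h)\,g^s_e = g^s_e\,\iota_{s(e)}s_e(h)$ with the compatibility relation for $f'$ evaluated on $s_e(h)$ --- checks that this data satisfies the Bass compatibility condition, so $f' \circ s$ is a bona fide morphism, and commutativity of the triangle means precisely $f = f' \circ s$ with this data. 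Now I would feed a direction $(g\,\iota_e(\mathcal{G}_e),e) \in D_v$ through the right-hand side, unwinding the definition of the direction map twice:
\begin{align*}
D_{s(v)} f'\bigl(D_v s(g\,\iota_e(\mathcal{G}_e),e)\bigr)
&= D_{s(v)} f'\bigl(s_v(g)\,g^s_e\,\iota_{s(e)}(\mathcal{G}'_{s(e)}),\,s(e)\bigr) \\
&= \bigl(f'_{s(v)}(s_v(g)\,g^s_e)\,g^{f'}_{s(e)}\,\iota_{f(e)}(\mathbb{G}_{f(e)}),\,f(e)\bigr).
\end{align*}
Since $f'_{s(v)}(s_v(g)\,g^s_e)\,g^{f'}_{s(e)} = f_v(g)\cdot\bigl(f'_{s(v)}(g^s_e)\,g^{f'}_{s(e)}\bigr)$ is exactly $f_v(g)$ times the element that $f = f'\circ s$ attaches to $e$, this equals $D_v f(g\,\iota_e(\mathcal{G}_e),e)$, proving the identity.

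I do not expect a genuine obstacle here --- the argument is pure bookkeeping --- but the step that needs care is getting the composition law for morphisms in the sense of Bass exactly right, in particular that the element attached to $e$ by $f'\circ s$ is $f'_{s(v)}(g^s_e)\,g^{f'}_{s(e)}$, with that order and that $f'_{s(v)}$-twist (and not, say, $g^{f'}_{s(e)}\,f'_{s(v)}(g^s_e)$ or $f'_{s(v)}(g^s_e)$ alone); an error there would break the coset computation in the last display. It is worth noting that the Stallings (injectivity) hypotheses on $f$, $f'$ and $s$ play no role in this argument --- functoriality of the direction map holds for arbitrary morphisms in the sense of Bass --- and are recorded only because the lemma is applied in that setting.
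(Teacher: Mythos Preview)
Your proposal is correct and follows the same approach as the paper: both derive the identity $D_vf = D_{s(v)}f' \circ D_vs$ from $f = f's$ and read off the conclusion on turns. You supply the explicit bookkeeping for the composition law of Bass morphisms and the resulting direction computation, which the paper simply asserts; your remark that the injectivity hypotheses are unused is also accurate.
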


\begin{proof}
    The equation $f's = f$ implies that for each vertex $v$ of $\mathcal{G}$,
    we have $D_vf = D_{s(v)}f'D_vs$.
    It follows that if a pair of directions $d$ and $d'$ 
    are connected in a piece of $v$ in the star graph of $\mathcal{G}$
    then $d$ and $d'$ are connected in a piece of $s(v)$ in the star graph of $\mathcal{G}'$,
    and the lemma follows.
\end{proof}

\section{Almost \texorpdfstring{$\mathbb{G}$}{G} graphs of groups.}
Suppose $\mathcal{G}$ is a finite graph of groups
equipped with a Stallings morphism $f\colon \mathcal{G} \to \mathbb{G}$
which is a homotopy equivalence in the sense of \cite{Myself};
it follows that the induced map on fundamental groups $f_\sharp$ is an isomorphism.
By the main result of \cite{BestvinaFeighnGTrees},
the map $f$ may be represented as a finite product of (Stallings) \emph{folds} followed by an isomorphism.
In the Bass--Serre tree $S$ of $\mathcal{G}$, each fold $G$-equivariantly identifies a pair of edges.
We say that $\mathcal{G}$ is \emph{almost $\mathbb{G}$} if a single fold suffices.
Examining the cases in \cite{BestvinaFeighnGTrees}, we see that the fold is of ``Type I'' or ``Type II''
according to whether the edges involved in the fold are in identical or different orbits.
Folds of ``Type III'' are not possible because they change the elliptic subgroups in $\mathcal{G}$,
contradicting the assumption that the map $f\colon \mathcal{G} \to \mathbb{G}$ is a homotopy equivalence.

\begin{prop}
    \label{cutvertex}
    Suppose $f\colon \mathcal{G} \to \mathbb{G}$ is an almost $\mathbb{G}$ graph of groups.
    The star graph of $\mathcal{G}$ has a cut vertex.
\end{prop}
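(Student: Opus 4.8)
The plan is to reduce to the single fold and then describe the star graph of $\mathcal{G}$ near one vertex of $\mathbb{G}$. By the discussion preceding the statement, $f\colon\mathcal{G}\to\mathbb{G}$ is, up to the concluding isomorphism, a single Stallings fold, of Type I or Type II; in the Bass--Serre tree of $\mathcal{G}$ it identifies a pair of edges $e_1,e_2$ with a common initial vertex. Write $\varepsilon$ for the common image of $e_1,e_2$ in $\mathbb{G}$, let $w$ be the endpoint of $\varepsilon$ onto which the terminal vertices of $e_1,e_2$ are sent, and put $A=\iota_{\bar\varepsilon}(\mathbb{G}_\varepsilon)\le\mathbb{G}_w$. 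The first task is to read off the local effect of the fold on vertex and edge groups: it presents $\mathbb{G}_w$ as the fundamental group of a one-edge graph of groups with edge group $A$ --- an amalgamated product $\mathbb{G}_w=P*_A Q$ for Type II, where $P$ and $Q$ are the two vertex groups of $\mathcal{G}$ lying over $w$ and $A$ is the common image of their identified edge groups, and an HNN extension (degenerating to a self-amalgam in exceptional cases) for Type I. Let $\mathcal{T}$ denote the Bass--Serre tree of this one-edge splitting of $\mathbb{G}_w$; it is a tree.

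The core of the proof is the resulting description of $\Gamma_w$, the component of the star graph of $\mathcal{G}$ indexed by $w$. I claim that the pieces of $\Gamma_w$ over $w$ correspond bijectively to the vertices of $\mathcal{T}$, that the direction-vertices of $\Gamma_w$ over $\bar\varepsilon$ correspond bijectively to the edges of $\mathcal{T}$, and that these, with the edges of $\Gamma_w$ joining them, form precisely the barycentric subdivision of $\mathcal{T}$ --- a piece and an over-$\bar\varepsilon$ direction being adjacent exactly when the corresponding vertex and edge of $\mathcal{T}$ are incident --- while each remaining direction-vertex of $\Gamma_w$, each of which lies over an edge of $\mathbb{G}$ at $w$ other than $\varepsilon$, is a pendant attached to a single piece. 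Verifying this amounts to translating the direction maps $D_vf$ and the indices $|\mathbb{G}_w:f_v(\mathcal{G}_v)|$ used to build the star graph into Bass--Serre coset language: a piece over $w$ is a coset of a vertex group of the splitting, hence a vertex of $\mathcal{T}$; an over-$\bar\varepsilon$ direction is a coset of $A$, hence an edge of $\mathcal{T}$; the piece at a vertex $a$ of $\mathcal{T}$ meets, among the over-$\bar\varepsilon$ directions, exactly the $\mathcal{T}$-edges at $a$; and distinct pieces have disjoint footprints among the remaining directions. Granting the claim, $\Gamma_w$ is connected because $\mathcal{T}$ is. A routine check gives connectedness of every other component $\Gamma_{w'}$: only one vertex of $\mathcal{G}$ lies over $w'$, and the single piece it contributes is a star on all of $D_{w'}$, since that vertex's direction map is surjective --- automatically so, as $\mathcal{G}$ is one fold from $\mathbb{G}$.

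The cut vertex is then immediate. Let $d^*\in D_w$ be any direction over $\bar\varepsilon$ --- say the one given by the trivial coset, corresponding to the edge $1\cdot A$ of $\mathcal{T}$. In the barycentric subdivision of a tree, deleting the midpoint of an edge disconnects it into the edge's two sides, each nonempty; the pendant directions merely remain attached on their respective sides. Hence $\Gamma_w\setminus\{d^*\}$ is disconnected, $d^*$ is a cut vertex, and the star graph of $\mathcal{G}$ is connected with a cut vertex.

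The step I expect to be the main obstacle is the bookkeeping of the second paragraph, above all extracting from the structure theory of folds in \cite{BestvinaFeighnGTrees} a description of the one-edge splitting of $\mathbb{G}_w$, and of $A$ relative to it, uniform enough to cover both fold types and the awkward sub-cases: $\varepsilon$ a loop of $\mathbb{G}$, so that directions over $\varepsilon$ as well as over $\bar\varepsilon$ appear in $\Gamma_w$ and the piece of the base vertex of the splitting touches both; or a side of the splitting that is trivial as a splitting of groups yet still contributes an edge to $\mathcal{T}$, as already happens for almost roses, where $\mathbb{G}_w$ is trivial. Once the ``barycentric subdivision plus pendants'' picture is in place the conclusion costs nothing; the real content is the disjointness of the footprints, which turns the elementary fact that an edge of a tree is a bridge into the statement that the single direction $d^*$ is a cut vertex. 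The degenerate sub-cases only ever make the deletion of $d^*$ more disconnecting, never less.
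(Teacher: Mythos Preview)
Your approach is conceptually appealing and genuinely different from the paper's, but there is a real gap at the point you yourself flag as ``the main obstacle''---and it is not merely bookkeeping.

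The heart of your argument is the assertion that the fold presents $\mathbb{G}_w$ as the fundamental group of a one-edge graph of groups with edge group $A=\iota_{\bar\varepsilon}(\mathbb{G}_\varepsilon)$, so that the bipartite graph on (pieces over $w$) and ($\bar\varepsilon$-directions) is the barycentric subdivision of an honest Bass--Serre tree $\mathcal{T}$. But a single fold need not do this. All the fold gives you directly is that $\mathbb{G}_w=\langle P,Q\rangle$ (in your amalgam case), and the coset incidence graph you build is a tree \emph{if and only if} $\mathbb{G}_w$ is literally the amalgam $P *_A Q$. This fails without the homotopy-equivalence hypothesis: take $P\cong\mathbb{Z}/2$, $Q\cong\mathbb{Z}/3$, $A$ trivial, generating $\mathbb{G}_w\cong\mathbb{Z}/6$; then the coset graph has $3+2$ piece-vertices, $6$ direction-vertices, and $12$ edges, hence a cycle, and removing a single $\bar\varepsilon$-direction does not disconnect. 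Of course this configuration is \emph{not} an almost-$\mathbb{G}$ graph of groups, precisely because the fold changes elliptic subgroups---a generator of $\mathbb{Z}/6$ is elliptic in $\mathbb{G}$ but hyperbolic in $\mathcal{G}$. So the hypothesis that $f$ is a homotopy equivalence is exactly what rules this out, and it is exactly what you never invoke. The paper's proof meets this issue head-on: at the corresponding step it explicitly constructs, from a putative bad configuration, a loop that is hyperbolic in $\mathcal{G}$ but elliptic in $\mathbb{G}$, deriving a contradiction. Your ``translation into Bass--Serre coset language'' only identifies the vertex and edge \emph{sets} of the would-be tree; it does not, and cannot by itself, establish simple connectivity.

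Two smaller points. First, you have Types I and II interchanged relative to the paper (and to Bestvina--Feighn): the two-vertex amalgam case is Type I, the single-orbit case is Type II. Second, in the paper's Type II the edge $\varepsilon$ turns out to be a loop with $\iota_\varepsilon(\mathbb{G}_\varepsilon)=\mathbb{G}_w$, so there is a \emph{unique} direction over $\varepsilon$ and several over $\bar\varepsilon$; the paper shows the $\varepsilon$-direction is the cut vertex, and the argument that the pieces meet only there again uses the homotopy equivalence. Your uniform ``$\bar\varepsilon$-directions are edges of $\mathcal{T}$'' picture does not obviously capture this case, and in any event still requires the missing splitting argument. If you can supply a clean proof that the fold, under the homotopy-equivalence hypothesis, yields a genuine one-edge splitting of $\mathbb{G}_w$ with the stated data, your approach would then be a pleasant alternative to the paper's case analysis.
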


\begin{proof}
    Suppose first that the fold is of ``Type I,'' 
    i.e.~that there are distinct edges $e$ and $e'$ of $\mathcal{G}$
    sharing a common initial vertex $x$ satisfying $f(e) = f(e')$.
    Let $u$ and $v$ be the (distinct) terminal vertices of $e$ and $e'$ in $\mathcal{G}$
    and $w = f(u) = f(v)$ in $\mathbb{G}$.
    We claim that the star graph of $\mathcal{G}$ is connected,
    and that each direction with underlying oriented edge $f(e) = f(e')$
    is a cut vertex of the star graph.

    Notice first that if $y$ is a vertex of $\mathbb{G}$ not equal to $f(x)$ or $w$,
    then $\Gamma_y$ is the cone on $D_y$,
    because after folding $e$ and $e'$, the map $f$ becomes an isomorphism.
    For the same reason, if $y = f(x) \ne w$,
    the graph $\Gamma_y$ is obtained from the cone on $D_y$
    by attaching an extra edge from the cone point to each direction $d$ 
    with underlying oriented edge $f(\bar e) = f(\bar e')$.
    Thus these graphs are connected.
    After folding, the graph $\Gamma_w$ must be the cone on $D_w$,
    which is only possible if each direction $d \in D_w$ is in the image of
    either some $g.D_uf$ or $g.D_vf$.
    The direction $d$ is thus connected to a direction with underlying oriented edge $f(e) = f(e')$.
    We claim each pair of directions $d$ and $d'$ with underlying oriented edge $f(e) = f(e')$
    is connected in the star graph, which shows that $\Gamma_w$ is connected.
    Indeed, $d = g.d'$ for some $g \in \mathbb{G}_w$.
    Since $f$ folds to an isomorphism, we have that 
    $\mathbb{G}_w = \langle f_u(\mathcal{G}_u)\cup f_v(\mathcal{G}_v)\rangle$,
    so we may express $g$ as a word in elements of $f_u(\mathcal{G}_u) \cup f_v(\mathcal{G}_v)$,
    and an easy induction argument on the length of the word shows that $d$ and $d'$ are connected.

    It remains to see that each direction $d \in D_w$ 
    with underlying oriented edge $f(e) = f(e')$ is a cut vertex.
    Notice that because after folding, the map $f$ becomes an isomorphism,
    $e$ and $e'$ are the only edges incident to $u$ and $v$ respectively
    mapped to the same edge by $f$.
    It follows that directions in $D_w$ with underlying oriented edge not equal to $f(e) = f(e')$
    are in the image of some translates of $D_uf$ or $D_vf$ but not both.
    Put another way, since $\Gamma_w$ is connected, we have
    \[
        D_w = \left(\bigcup_{g\in \mathbb{G}_w}g.D_uf(D_u)\right)\cup
        \left(\bigcup_{g\in \mathbb{G}_w}g.D_vf(D_v)\right)
    \] and \[  
        \mathbb{G}_w/\iota_{f(e)}(\mathbb{G}_{f(e)})\times \{f(e)\} 
        = \left(\bigcup_{g \in \mathbb{G}_w}g.D_uf(D_u)\right) \cap 
        \left(\bigcup_{g \in \mathbb{G}_w}g.D_vf(D_v)\right).
        \]
    This shows that each $d$ with underlying oriented edge $f(e) = f(e')$ is a cut vertex
    in $\Gamma_w$ unless there is a nondegenerate turn $[\{d,d'\}]$ taken by $\mathcal{G}$ at both $u$ and $v$.
    Consider the conjugacy class of an element of $G$ whose representative in $\mathcal{G}$
    is a loop contained in $e \cup e'$ making turns at $u$ and $v$ mapping to $[\{d,d'\}]$,
    and making turns at $x$ whose $D_xf$-images are degenerate,
    for example the loop
    \[
        h'\bar e' eh\bar e e'
    \]
    for appropriate choices of $h \in \mathcal{G}_u$ and $h' \in \mathcal{G}_v$.
    By construction, this conjugacy class is hyperbolic in $\mathcal{G}$ but elliptic in $\mathbb{G}$,
    a contradiction.
    
    Now suppose the fold is of ``Type II,''
    i.e.~that there is an edge $e$ of $\mathcal{G}$ with initial vertex $u$ and terminal vertex $v$
    and an element $g \in \mathcal{G}_u$ 
    such that in the Bass--Serre tree $S$ of $\mathcal{G}$, the fold $G$-equivariantly identifiies
    a lift $\tilde e$ of $e$ and $g.\tilde e$,
    where $g$ is identified with an element of the stabilizer of the corresponding lift of $u$.
    Thus the map $f\colon \mathcal{G} \to \mathbb{G}$ is an isomorphism of underlying graphs,
    while the maps $f_e\colon \mathcal{G}_e \to \mathbb{G}_{f(e)}$ 
    and $f_v\colon \mathcal{G}_v \to \mathbb{G}_{f(v)}$
    are injective but not surjective.
    If, abusing notation, we identify $g \in G$ 
    with the corresponding elements of $\mathbb{G}_{f(v)}$ and $\mathbb{G}_{f(e)}$,
    we have $\mathbb{G}_{f(v)} = \langle f_v(\mathcal{G}_v)\cup \{g\}\rangle$
    and $\mathbb{G}_{f(e)} = \langle f_e(\mathcal{G}_e) \cup \{g\}\rangle$.
    Furthermore, we have $\iota_e(\mathcal{G}_e) = \mathcal{G}_v$,
    and therefore $\iota_{f(e)}(\mathbb{G}_{f(e)}) = \mathbb{G}_{f(v)}$.
    Indeed supposing not, consider the conjugacy class determined by the loop
    \[  geg'\bar e \]
    for some $g' \in \mathcal{G}_v\setminus \mathcal{G}_e$.
    Then this conjugacy class is hyperbolic in $\mathcal{G}$ but elliptic in $\mathbb{G}$,
    a contradiction.
    It follows that the edge $f(e)$ forms a loop in $\mathbb{G}$
    and therefore there exists a direction $d$ with underlying oriented edge different from $f(e)$.
    It is clear that $\iota_{\bar e}(\mathcal{G}_e) \ne \mathcal{G}_v$,
    for there would be nothing to fold if not.
    Consider the fundamental group of the subgraph of groups corresponding to the edge $f(e)$.
    It is convex-cocompact in $\mathbb{G}$ and $\mathcal{G}$,
    and if we had $\iota_{f(\bar e)}(\mathbb{G}_{f(e)}) = \mathbb{G}_{f(v)}$,
    its minimal subtree in the Bass--Serre tree of $\mathbb{G}$ would be a line,
    while it is not a line in the Bass--Serre tree of $\mathcal{G}$.
    This contradicts the fact that the homotopy equivalence $f$ 
    restricts to an equivariant quasi-isometry of minimal subtrees.
    Therefore $\iota_{f(\bar e)}(\mathbb{G}_{f(e)}) \ne \mathbb{G}_{f((v)}$.
    Since after folding, the map $f$ is an isomorphism,
    and the fold does not alter vertex and edge groups other than $\mathcal{G}_v$ and $\mathcal{G}_e$,
    if $e'$ is an oriented edge with initial vertex $v$ in $\mathcal{G}$ distinct from $e$ and $\bar e$,
    we have $\mathbb{G}_{e'} = f_{e'}(\mathcal{G}_{e'})$ and
    $\iota_{f(e')}(\mathbb{G}_{f(e')}) \ne \mathbb{G}_{f(v)}$.
    If $g_{\bar e}$ is the element of $\mathbb{G}_{f(v)}$ as part of the definition of $f$,
    thinking of $g$ as an element of $\mathbb{G}_{f(e)}$,
    we have $g_{\bar e}\iota_{\bar e}(g)g_{\bar e}^{-1} \in f_v(\mathcal{G}_v)$ by construction.

    By the argument in the case of a ``Type I'' fold, each $\Gamma_y$ is connected 
    for $y \ne f(v)$ a vertex of $\mathbb{G}$.
    Since after folding, the graph $\Gamma_{f(v)}$ is the cone on $D_{f(v)}$,
    we have that each direction $d$ in $D_{f(v)}$ is in the image of some $x.D_vf$
    and is thus connected to the unique direction $d'$ with underling oriented edge $f(e)$.

    We claim that the unique direction with underlying oriented edge $f(e)$ is a cut vertex.
    This is equivalent to the claim that the pieces of $v$ in $\Gamma_{f(v)}$ overlap
    exactly in the unique direction with underlying oriented edge $f(e)$.
    So consider a direction $d$ in $D_{f(v)}$ with underlying oriented edge not equal to $f(e)$,
    and suppose $d = x.D_vf(c) = x'D_vf(c')$ for directions $c$ and $c'$ in $D_v$
    and elements $x$ and $x'$ in $\mathbb{G}_{f(v)}$.
    Write $c = (y\iota_{e'}(\mathcal{G}_{e'}),e')$ and $c' = (y'\iota_{e'}(\mathcal{G}_{e'}),e')$
    and observe that we therefore have
    \[  xf_v(y)g_{e'}\iota_{f(e')}(\mathbb{G}_{f(e')}) 
    = x'f_v(y')g_{e'}\iota_{f(e')}(\mathbb{G}_{f(e')}). \]
    If the oriented edge $e'$ satisfies $e' \ne \bar e$,
    then $\mathbb{G}_{f(e')} = f_{e'}(\mathcal{G}_{e'})$,
    and since we have 
    \[
        g_{e'}\iota_{f(e')}f_{e'}(\mathcal{G}_{e'}) g_{e'}^{-1} = f_v\iota_{e'}(\mathcal{G}_{e'}),
    \]
    we conclude that $x$ and $x'$ determine the same coset of $\mathbb{G}_{f(v)}/f_v(\mathcal{G}_v)$.
    In the case $e' = \bar e$,
    we do not have $\mathbb{G}_{f(\bar e)} = f_{\bar e}(\mathcal{G}_{e})$,
    but it is nevertheless true that 
    $g_{\bar e}\iota_{f(\bar e)}(\mathbb{G}_{f(e)})g_{\bar e}^{-1} \le f_v(\mathcal{G}_v)$
    and we again conclude that $x$ and $x'$ determine the same coset of 
    $\mathbb{G}_{f(v)}/f_v(\mathcal{G}_v)$,
    proving the claim.
\end{proof}

\begin{ex}
    Consider the standard graph of groups presentation for $BS(1,6)$
    with one vertex $v$ and one loop edge $e$.
    We have $\mathbb{G}_v = \mathbb{Z}$ and $\mathbb{G}_e = \mathbb{Z}$;
    the inclusion $\iota_e$ is an isomorphism which we will treat as the identity,
    and the inclusion $\iota_{\bar e}$ is multiplication by $6$.
    An example of an almost $\mathbb{G}$ graph of groups is the following graph of groups $\mathcal{G}$.
    As a graph of groups $\mathcal{G}$ is abstractly isomorphic to $\mathbb{G}$,
    the graph map $f$ is an isomorphism, $g_e$ and $g_{\bar e}$ are trivial,
    but the maps $f_v$ and $f_e$ are multiplication by $3$.
    We have
    \[  D_v = \{e, (6\mathbb{Z},\bar e), (1+ 6\mathbb{Z},\bar e), (2+6\mathbb{Z},\bar e),
    (3 + 6\mathbb{Z},\bar e), (4+6\mathbb{Z},\bar e), (5 + 6\mathbb{Z}, \bar e)\}. \]
    The map $D_vf$ sends $e$ to $e$ and $(k + 6\mathbb{Z},\bar e)$ to $(3k + 6\mathbb{Z},\bar e)$.
    As $g$ varies over the coset representatives of $\mathbb{G}_v/f_v(\mathcal{G}_v)$,
    pairs of directions with underlying oriented edge $\bar e$ are connected by a path of length two
    in the star graph if and only if they differ by $3\mathbb{Z}$.
    The direction $e$ is a cut vertex.
\end{ex}

\begin{proof}[Proof of \Cref{maintheorem}]
    Let $C$ be a collection of jointly simple conjugacy classes 
    of elements and convex-cocompact subgroups of $G$.
    Choose a tree $S$ in the reduced deformation space of $T$
    which collapses onto the tree $S'$ in which each conjugacy class in $C$ is elliptic.
    By assumption, there is a $G$-equivariant map $\tilde f\colon S \to T$,
    which after $G$-equivariantly subdividing edges of $S$ we may assume is simplicial
    (edges to edges, vertices to vertices).
    Having performed this subdivision, consider the quotient graph of groups $G\doublebackslash S$.
    The map $\tilde f$ descends 
    to a Stallings morphism in the sense of Bass $f\colon \mathcal{G} \to \mathbb{G}$,
    where $\mathbb{G} = G\doublebackslash T$.
    The preimages of vertices of $G\doublebackslash S'$ in $\mathcal{G}$ determine one
    (if $G\doublebackslash S'$ is a loop) or two (if $G\doublebackslash S'$ is not a loop)
    subgraphs of groups.
    We will suppose there are two, call them $\mathcal{H}_1$ and $\mathcal{H}_2$;
    the case of one is simpler and only superficially different.
    These subgraphs of groups have the property that they are disjoint in $\mathcal{G}$ 
    and that each conjugacy class in $C$
    is conjugate either into $\pi_1(\mathcal{H}_1)$ or $\pi_1(\mathcal{H}_2)$.
    The map $f\colon \mathcal{G} \to \mathbb{G}$ may be written as a product of folds.
    By first performing all folds that involve only edges of $\mathcal{H}_1$ 
    or only edges of $\mathcal{H}_2$,
    we may assume that $f$ restricts to an immersion on $\mathcal{H}_1$ and $\mathcal{H}_2$.
    Since each conjugacy class in $C$ is conjugate into
    either $\pi_1(\mathcal{H}_1)$ or $\pi_1(\mathcal{H}_2)$,
    it follows that the Stallings graph of groups $s\colon \mathcal{G}_C\to \mathbb{G}$ representing $C$ 
    may be factored through $f$.

    We distinguish two cases.
    In the first case, $f$ is an isomorphism,
    so there is a (necessarily separating) edge $e$
    which is the complement in $\mathbb{G}$ of $f(\mathcal{H}_1)$ and $f(\mathcal{H}_2)$.
    The edge $e$ is therefore not in the image of $s$
    and any direction with underlying oriented edge $e$ or $\bar e$
    is isolated in the star graph of $C$.
    In this case the star graph is disconnected.

    In the second case, $f$ is not an isomorphism,
    so there is a further sequence of folds we may perform.
    It follows that there is a map $\mathcal{G} \to \mathcal{G}'$,
    where $\mathcal{G}'$ is an almost $\mathbb{G}$ graph of groups.
    By \Cref{cutvertex}, the star graph of $\mathcal{G}'$ has a cut vertex.
    By the contrapositive of \Cref{mapsgivemaps}, every turn not taken by $\mathcal{G}'$
    is not taken by $\mathcal{G}$ nor by $\mathcal{G}_C$.
    It follows that if the star graph of $C$ is connected,
    any cut vertex for the star graph of $\mathcal{G}'$ will be a cut vertex for the star graph of $C$.
    Therefore either the star graph of $C$ is disconnected or it has a cut vertex.
\end{proof}

\bibliographystyle{alpha}
\bibliography{bib.bib}
\end{document}